\newtheorem{thm}{Theorem}[section]
\newtheorem{prop}[thm]{Proposition}
\newtheorem{lem}[thm]{Lemma}
\newtheorem{defn}[thm]{Definition}
\newtheorem{rmk}[thm]{Remark}
\numberwithin{equation}{section}   
\newcommand{\rdbb}{\mathbb{R}^d}
\newcommand{\lra}{\longrightarrow}
\newcommand{\rbb}{\mathbb{R}}
\newcommand{\kd}{\mathcal{K}_d}
\newcommand{\as}{\mathcal{A}s}
\newcommand{\mcalo}{\mathcal{O}}
\newcommand{\unit}{\textbf{1}}
\newcommand{\mbbc}{\mathcal{C}}
\newcommand{\mcau}{\mathcal{U}}
\title{ \textbf{Symmetric multiplicative formality of the Kontsevich operad}}
\date{}
\author{Paul Arnaud Songhafouo Tsopméné}
\begin{document}
\maketitle

\begin{abstract}
In his famous paper entitled \lq\lq Operads and motives in deformation quantization\rq\rq{}, Maxim Kontsevich constructed (in order to prove the formality of the little $d$-disks operad) a topological operad, which is called in the literature the \textit{Kontsevich operad}, and which is denoted $\kd$ in this paper. This operad has a nice structure: it is a \textit{multiplicative symmetric operad}, that is, it comes with a morphism from the symmetric associative operad. There are many results in the literature regarding the formality of $\kd$. It is well  known (by Kontsevich) that $\kd$ is formal over reals as a symmetric operad. It is also well known (independently by  Syunji Moriya and the author) that $\kd$ is formal as a multiplicative nonsymmetric operad. In this paper, we prove that the Kontsevich operad is formal over reals as a multiplicative symmetric operad, when $d \geq 3$. 
\end{abstract}

\tableofcontents

\section{Introduction}

Let $d \geq 1$ be the dimension of the ambient space. The object we look at in this paper is the \textit{Kontsevich operad} that we denote $\kd := \{\kd(n)\}_{n \geq 0}$.  Recall that $\kd(0)$ and $\kd(1)$ are the one-point spaces, and for $n \geq 2$, $\kd(n)$ is the Kontsevich compactification of the configuration space of $n$ points in $\rdbb$.  More precisely, let $\text{Conf}(n, \rbb^d)$ denote the ordered configuration space of $n$ points in $\rdbb$ with the subspace topology. Define a map 
\[
\tau \colon \text{Conf}(n, \rdbb) \lra \prod_{1 \leq i < j \leq n} S^{d-1} \ \ \text{by} \ \  \tau(x_1, \cdots, x_n) = \left(\frac{x_j -x_i}{\left\|x_j - x_i\right\|} \right)_{1\leq i < j \leq n}.
\]
By definition $\kd(n)$ is the closure of the image of $\tau$. That is, $\kd(n) = \overline{\text{im}}(\tau)$. In particular, when $d=1$, one has 
\begin{eqnarray} \label{ko_defn}
\mathcal{K}_1(n) = *, n \geq 0,
\end{eqnarray}
where $*$ stands for the one-point topological space. Intuitively, one should think an element $x \in \kd(n)$ as a \lq\lq virtual\rq\rq{} configuration of $n$ points in which it is possible to get two or more points that are infinitesimally close to each other in such a way that the direction between any two of them is always well recorded. From this point of view, given two virtual configurations  $x=(x_1, \cdots, x_n) \in \kd(n)$ and $y=(y_1, \cdots, y_m) \in \kd(m)$, given an integer $1 \leq i \leq n$, we can form a new virtual configuration $x \circ_i y \in \kd(n+m-1)$ of $n+m-1$ points by \lq\lq substituting\rq\rq{} the point $x_i$ by the configuration $y$ made infinitesimal, and by keeping the other points  of the configuration $x$. This defines an operad structure on $\kd = \{\kd(n)\}_{n \geq 0}$. For a more precise definition of this structure, we refer the reader to the paper \cite[Section 4]{sin06} of Sinha. 

This latter author shows \cite{sin04} that the operad $\kd$ is weakly equivalent, as a topological operad, to the little $d$-disks operad. He also proves \cite{sin06} that $\kd$  is a  multiplicative symmetric operad.  Recall that a topological  operad $O$ is  said to be a  \textit{multiplicative symmetric operad} if it is a symmetric operad, and if there is a morphism $\Sigma[\as] \lra O$ of operads from the symmetric associative operad to $O$ that respects the symmetric structures. Here $\Sigma[\as](n)$ is the coproduct $\Sigma[\as](n) = \coprod_{\sigma \in \Sigma_n} *$, where $\Sigma_n$ stands for the symmetric group on $n$ letters.  Since this paper is devoted to the study of the formality of $\kd$, we need the following definition.


\begin{defn}
Let $O$ be a topological  operad. We say that $O$ is \emph{stable formal} or just \emph{formal} over a field $\mathbb{K}$ if there exists a zigzag
\begin{eqnarray} \label{zigzag_intro}
\xymatrix{ C_* (O; \mathbb{K}) & \ar[l]_-{\sim} \cdots \ar[r]^-{\sim} & H_* (O; \mathbb{K})}
\end{eqnarray}
of quasi-isomorphisms between the singular chain of $O$ and its homology.
\begin{enumerate}
\item[$\bullet$] If $O$ is equipped with a symmetric structure, and if all morphisms in (\ref{zigzag_intro}) are of symmetric operads, we say that $O$ is  \emph{formal as a symmetric operad}.
\item[$\bullet$] If $O$ is a multiplicative nonsymmetric operad (that is, there is a morphism $\{*\}_{n \geq 0}=\as \lra O$ from the associative operad to $O$), and if (\ref{zigzag_intro}) holds in the category of multiplicative nonsymmetric operads, we say that $O$ is \emph{formal as a multiplicative operad}.
\item[$\bullet$] If $O$ is a multiplicative symmetric operad, and if (\ref{zigzag_intro}) holds in the category of multiplicative symmetric operads, we say that $O$ is \emph{formal as a multiplicative symmetric operad}.
\end{enumerate}
\end{defn}

There are many formality theorems regarding $\kd$ in the literature, but we are only interested in the following two known results. The first one is due to Kontsevich \cite{kont99}, and it says that the operad $\kd$ is formal over reals as a symmetric operad. The second one, independently due to Syunji Moriya \cite{moriya12} and the  author \cite{songhaf12}, states that $\kd$ is formal over reals as a multiplicative operad. One natural question one can ask is to know whether those two results immediately imply that $\kd$ is formal as a multiplicative symmetric operad. As we will see in this text, the answer to this question is not immediate, and requires a slightly more careful analysis.   Our main result is the following.

\begin{thm} \label{for_kont_thm}
For $d \geq 3$, the Kontsevich operad $\kd$ is formal over reals as a multiplicative symmetric operad.  
\end{thm}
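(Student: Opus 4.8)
The plan is to take Kontsevich's zigzag of symmetric operads realizing the symmetric formality of $\kd$ and to upgrade it, after a careful choice of multiplications at each stage, to a zigzag of multiplicative symmetric operads. (Combining the two known formalities directly does not obviously work: their zigzags pass through unrelated intermediate operads, with no evident common refinement respecting both structures simultaneously.) The key reduction is the following: for a symmetric operad $O$ — in topological spaces, or in chain complexes over $\rbb$ — a morphism $\Sigma[\as] \lra O$ is the same datum as a point $\mu \in O(2)$ (respectively, a degree $0$ cycle) satisfying the single identity $\mu \circ_1 \mu = \mu \circ_2 \mu$ in $O(3)$; the $\Sigma_2$-equivariance is then automatic, because $\Sigma[\as]$ is generated as a symmetric operad by its arity $2$ operation subject to this one relation. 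So it suffices to produce, along Kontsevich's zigzag, compatible multiplications of this kind, carried to one another by all of the structure maps.

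At the two endpoints this is immediate. On $C_*(\kd; \rbb)$ we take Sinha's basepoint $\mu \in \kd(2)$ viewed as a degree $0$ cycle in $C_0(\kd(2); \rbb)$; it is strictly associative because, for $d \geq 2$, the space $\kd(3) \simeq \mbox{Conf}(3, \rdbb)$ is connected and Sinha's construction makes the two iterated products literally equal. On the homology operad $H_*(\kd; \rbb)$ — the $d$-Poisson operad — we take the canonical generator of $H_0(\kd(2); \rbb) \cong \rbb$, which is associative since $H_0(\kd(3); \rbb) \cong \rbb$. The content therefore lies in the intermediate stages of Kontsevich's zigzag, which are assembled from the CDGA cooperad of admissible graphs and the piecewise-semialgebraic forms $\Omega^*_{PA}(\kd)$; dualizing these arity-wise yields symmetric operads in chain complexes, among them $\mcald$. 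Each such object carries a distinguished degree $0$ class, dual to the ``edgeless'' graph on two external vertices — equivalently, dual to the unit of the arity $2$ CDGA. A direct inspection of the cooperadic cocomposition shows this class to be grouplike, whence its dual is a strictly associative degree $0$ cycle: the candidate multiplication at that stage.

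The main obstacle, and the point where the hypothesis $d \geq 3$ is essential, is to check that the comparison maps of Kontsevich's zigzag — notably the integration map from the graph cooperad to $\Omega^*_{PA}(\kd)$, and the quasi-isomorphisms relating PA-forms to singular chains — carry these multiplications to one another \emph{on the nose}, not merely up to homology, since otherwise rigidifying them may destroy strict associativity. To handle this we use that for $d \geq 3$ the configuration spaces $\kd(n) \simeq \mbox{Conf}(n, \rdbb)$ are simply connected and have vanishing rational cohomology in low positive degrees, so that the obstruction to rectifying the multiplication compatibly with the full operad structure lies in a group that vanishes in the relevant range. Conceptually: the derived mapping space of symmetric operads from $\Sigma[\as] \simeq \Sigma[\as_\infty]$ into $C_*(\kd; \rbb)$ — the space of $A_\infty$-multiplications — is, for $d \geq 3$, connected and simply connected, hence the multiplicative structure is unique up to a contractible space of choices and transports through any zigzag of symmetric operads; when a given stage a priori only receives a homotopy-associative (that is, a $\Sigma[\as_\infty]$-) multiplication, one rectifies it using the weak equivalence $\Sigma[\as_\infty] \srel \Sigma[\as]$ and a lifting along the induced fibration. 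Carrying this out at every stage, together with the routine verification that the edgeless-graph class is compatible with all cooperadic structure maps, upgrades Kontsevich's zigzag to one of multiplicative symmetric operads, which proves Theorem \ref{for_kont_thm}.
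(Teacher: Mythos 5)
Your overall strategy --- upgrade a formality zigzag by choosing a strictly associative degree~$0$ cycle at each stage and checking the comparison maps preserve it --- is genuinely different from the paper's, and it founders exactly where you flag the difficulty. The middle stage of the zigzag (the chains on the Fulton--MacPherson operad $\mathcal{F}_d$, which is what actually sits between $C_*(\kd)$ and the graph complex $\mathcal{D}^{\vee}_d$ in the Lambrechts--Voli\'c construction) does \emph{not} carry a strict multiplication compatible with the maps to both ends: $\mathcal{F}_1(3)$ is the interval, the two iterated products are its two distinct endpoints, so the natural multiplicative structure there is only an $A_\infty$ one. You acknowledge this and propose to ``rectify it using the weak equivalence $\Sigma[\as_\infty]\srel\Sigma[\as]$ and a lifting along the induced fibration,'' but that sentence is the entire content of the theorem and is left unproved. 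There is no fibration in sight until one fixes a (semi-)model structure on symmetric operads, verifies the relevant cofibrancy hypotheses (symmetric operads over a field of characteristic zero only form a \emph{semi-}model category in the sense of Fresse, so factorizations and liftings are only available over objects that are cofibrant as symmetric sequences), and shows that the rectified object still maps by weak equivalences to \emph{both} $C_*(\kd)$ and $\mathcal{D}^{\vee}_d$ compatibly with the two given multiplications. The paper does precisely this: it factors $\Sigma[C_*(\mathcal{F}_1)]\to C_*(\mathcal{F}_d)$ as a cofibration followed by an acyclic fibration, pushes out the cofibration along $\Sigma[C_*(\mathcal{F}_1)]\to\Sigma[\as]$, and invokes relative properness plus two-out-of-three to see that the pushout $\widetilde{R}$ is weakly equivalent to both ends. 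None of that is replaced by your obstruction-theoretic appeal.

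That appeal is also the wrong mechanism for the role you assign to $d\geq 3$. Simple connectivity of $\mbox{Conf}(n,\rdbb)$ and ``vanishing rational cohomology in low positive degrees'' do not by themselves show that the derived mapping space of symmetric operad maps $\Sigma[\as]\to C_*(\kd;\rbb)$ is connected and simply connected; that is a delicate statement about spaces of $E_1$-structures inside an $E_d$-structure, and you neither compute the obstruction groups nor explain why connectivity of that mapping space would let you transport a \emph{strict} multiplication through a zigzag whose middle terms only admit homotopy multiplications. (In the paper, $d\geq 3$ enters only through the hypotheses of the Lambrechts--Voli\'c relative formality zigzag, not through any connectivity of configuration spaces.) Secondary, fixable issues: a morphism $\Sigma[\as]\to O$ also requires a compatible arity-$0$ element, which your reduction to a single binary cycle omits; and the strict associativity of Sinha's basepoint in $\kd(3)$ is a feature of the compactification, not a consequence of connectedness of $\mbox{Conf}(3,\rdbb)$. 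To repair the proof you would need to replace the rectification paragraph with an actual model-categorical argument of the type sketched above, at which point you would essentially be reproving the paper's Lemma~\ref{sym_lem2}.
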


As we said before, the multiplicative formality of $\kd$ was independently discovered by Moriya \cite{moriya12} and by the author \cite{songhaf12}. These two authors used this formality to  explicitly compute the natural Gerstenhaber algebra structure on the homology of the space of long knots. 



\textbf{Outline of the paper} In Section~\ref{equiv_mult_sym_operads} we prove a crucial Lemma~\ref{sym_lem2}, which will be the main ingredient in proving Theorem~\ref{for_kont_thm}.

In Section~\ref{for_kont_section} we prove Theorem~\ref{for_kont_thm} by applying Lemma~\ref{sym_lem2} to the specific zigzag between $C_*(\kd; \rbb)$ and its homology $H_*(\kd; \rbb)$. 


\section{Equivalences of multiplicative symmetric operads} \label{equiv_mult_sym_operads}

The goal here is to prove Lemma~\ref{sym_lem2}, which is the main result of this section. This lemma will be the key ingredient in the proof of Theorem~\ref{for_kont_thm}, which will be done in  Section~\ref{for_kont_section}. For our purposes, we need to start with the following definition. 

\begin{defn}\label{monoidalmodelcat_def}
Let $\mathcal{C} := (\mbbc, \otimes, \unit)$ be a symmetric monoidal category. 
\begin{enumerate}
\item[(i)] The category $\mathcal{C}$ is called \emph{symmetric monoidal model category} \index{symmetric monoidal model category} if it is equipped with a model structure such that the following axioms hold in $\mathcal{C}$.\\
\textit{Unit axiom}: the unit object, \emph{\textbf{1}}, is cofibrant in $\mathcal{C}$.\\
\textit{Pushout-product axiom}: the natural morphism
$(i_*, j_*): A \otimes D \oplus_{A \otimes C} B \otimes C \longrightarrow B \otimes D$
induced by cofibrations $i: \xymatrix{A\;\ar@{>->}[r] & B}$ and $j: \xymatrix{C\;\ar@{>->}[r] & D}$ forms a cofibration, respectively an acyclic cofibration if $i$ or $j$ is also acyclic.
\item[(ii) ]A model category  is said to be \emph{cofibrantly generated} \index{cofibrantly generated model category} if it is equipped with a set of generating cofibrations $\mathcal{I}$, and a set of generating acyclic cofibrations $\mathcal{J}$, such that
\begin{enumerate}
\item[$\bullet$] the fibrations are characterized by the right lifting property with respect to the acyclic generating cofibrations $j \in \mathcal{J}$, and 
\item[$\bullet$] the acyclic fibrations are characterized by the right lifting property with respect to the generating cofibrations $i \in \mathcal{I}$.
\end{enumerate}
\end{enumerate} 
\end{defn}

Throughout the section we reserve the letter $\mbbc:= (\mbbc, \otimes, \unit)$ for the base category over which our operads will be defined. It is a symmetric monoidal model category that is cofibrantly generated. 

\begin{rmk} \label{small_limitcolimit_rmk}
Since $\mathcal{C}$ is in particular a model category, it follows that it has all small limits and colimits (see \cite[Definition 1.1.4]{hovey99}).  
\end{rmk}

In order to prove Theorem~\ref{for_kont_thm}, we need  Lemma~\ref{sym_lem1} below, which involves the classical adjunction between the categories $\mcalo P_{ns} (\mbbc)$ and $\mcalo P_s (\mbbc)$ of nonsymmetric operads and symmetric operads respectively. 
Let us first recall the definition of a symmetric operad and of a nonsymmetric operad in $\mbbc$. A \textit{symmetric sequence} in $\mbbc$ consists of a sequence $\{X_n\}_{n \geq 0}$ in which  each $X_n$ is an object in $\mbbc$ equipped with an action of  the symmetric group $\Sigma_n$. By action we mean a morphism $\alpha_{X_n} \colon \Sigma_n \otimes X_n \lra X_n$ such that the obvious diagrams 

\[
\begin{minipage}{0.5\textwidth}
\xymatrix{\{e_{\Sigma_n}\} \otimes X_n \ar[rr]^-{i \otimes \mbox{id}_{X_n}} \ar[rrd]_{\cong} & &\Sigma_n \otimes X_n \ar[d]^{\alpha_{X_n}} \\                                                                               
                                                                                            & & X_n }
\end{minipage}%
\begin{minipage}[t]{1.5cm}
\includegraphics[scale=0.1]{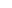}
\end{minipage}
\begin{minipage}{0.5\textwidth}
\xymatrix{\Sigma_n \otimes (\Sigma_n \otimes X_n) \ar[rr]^-{\circ \otimes \mbox{id}_{X_n}} \ar[d]_-{\mbox{id}_{\Sigma_n} \otimes \alpha_{X_n}} & & \Sigma_n \otimes X_n \ar[d]^-{\alpha_{X_n}} \\
\Sigma_n \otimes X_n     \ar[rr]_-{\alpha_{X_n}} & &  X_n }
\end{minipage}
\]

commute. Here,
\begin{enumerate}
\item[$\bullet$] $\Sigma_n \otimes X_n$ is the coproduct  $\Sigma_n \otimes X_n= \coprod_{\sigma \in \Sigma_n} X_n$, \footnote{This coproduct holds in $\mathcal{C}$, and exists because of Remark~\ref{small_limitcolimit_rmk}}
\item[$\bullet$] $i \colon \{e_{\Sigma_n} \} \hookrightarrow \Sigma_n$ is the inclusion of the unit, 
\item[$\bullet$] $\Sigma_n \otimes (\Sigma_n \otimes X_n)$ is naturally isomorphic to $(\Sigma_n \times \Sigma_n) \otimes X_n$, and
\item[$\bullet$]  \lq\lq$\circ$\rq\rq{} is the composition in $\Sigma_n$.  
\end{enumerate}
A \textit{symmetric operad} in $\mbbc$ consists of a symmetric sequence $O = \{O(n)\}_{n \geq 0}$ endowed with a unit $\unit  \lra O(1)$, and a collection of morphisms  
\[O(k) \otimes O(i_1) \otimes \cdots \otimes O(i_k) \lra O (i_1+\cdots + i_k) \]
that satisfy natural equivariance properties, unit and associative axioms (May's axioms, see \cite[Definition 1.1]{may72}). If we forget the symmetric structure, we obtain the so called \textit{nonsymmetric operad}. Symmetric operads (respectively nonsymmetric operads) form a category that we denote by $\mcalo P_s (\mbbc)$ or just by $\mcalo P_s$  (respectively by $\mcalo P_{ns} (\mbbc)$ or just by $\mcalo P_{ns}$).  These two categories are related by an adjunction 
\begin{eqnarray} \label{adjunction_operads}
\Sigma [-] \colon \mcalo P_{ns} (\mbbc) \rightleftarrows \mcalo P_s (\mbbc) : \mathcal{U},
\end{eqnarray}
where $\mathcal{U}$ is the forgetful functor, and $\Sigma[-]$ is defined as $\Sigma[P](n) = \Sigma_n \otimes P(n)$. Therefore, there is a bijection 
$$\Psi \colon \underset{\mcalo P_{ns}}{\mbox{Hom}}(P, \mathcal{U}(Q)) \stackrel{\cong}{\lra} \underset{\mcalo P_s}{\mbox{Hom}} (\Sigma[P], Q).$$ 
Recall that this bijection sends a morphism $f \colon P \lra \mathcal{U}(Q)$ to $\Psi_f \colon \Sigma[P] \lra Q$ defined as the composition 
\[\xymatrix{\Sigma[P] \ar[rr]^-{\Sigma[f]} & &  \Sigma[\mathcal{U}(Q)] \ar[r]^-{\eta_Q} & Q},\]
where $\eta = \{\eta_Q \colon \Sigma[\mcau(Q)] \lra Q\}_{Q \in \mcalo p_s}$ is the counit of the adjunction (\ref{adjunction_operads}). That is, $\Psi_f = \eta_Q \circ \Sigma[f]$. Here the counit is defined as follows. For all $n \geq 0$, the morphism 
\[(\eta_Q)_n = \alpha_{Q(n)} \colon \Sigma_n \otimes \mcau (Q) (n) \lra Q(n)
\]
is provided by the action of $\Sigma_n$ on $Q(n)$ since   $\mcau(Q)(n) = Q(n)$ by the definition of the forgetful functor, and since $Q$ is in particular a symmetric sequence. 

\begin{lem} \label{sym_lem1}
Let $g \colon Q \lra Q'$ be a morphism in $\mcalo P_s(\mbbc)$. Let
\begin{eqnarray} \label{square_lem1}
\xymatrix{\mathcal{U}(Q) \ar[rr]^-{\mcau(g)} & & \mcau (Q') \\
            P \ar[u]^{f} \ar[rr]^-h & & P' \ar[u]_{f'}  }
\end{eqnarray}						
be a commutative square in the category $\mcalo P_{ns} (\mbbc)$. Then the diagram
\[\xymatrix{Q \ar[rr]^-g & & Q' \\
            \Sigma[P] \ar[u]^{\Psi_f} \ar[rr]^-{\Sigma[h]} & & \Sigma[P'] \ar[u]_{\Psi_{f'}}   } \]
is a commutative square in $\mcalo P_{s} (\mbbc)$. 											
\end{lem}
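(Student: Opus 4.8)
The plan is to recognize this statement as an instance of the naturality of the adjunction bijection $\Psi$ from (\ref{adjunction_operads}), which I would verify directly from the explicit formula $\Psi_f = \eta_Q \circ \Sigma[f]$ together with the naturality of the counit $\eta$. First I would record what must be proven: since $\Psi_f = \eta_Q \circ \Sigma[f]$ and $\Psi_{f'} = \eta_{Q'} \circ \Sigma[f']$, the asserted commutativity $g \circ \Psi_f = \Psi_{f'} \circ \Sigma[h]$ is equivalent to the identity $g \circ \eta_Q \circ \Sigma[f] = \eta_{Q'} \circ \Sigma[f'] \circ \Sigma[h]$ in $\mcalo P_s(\mbbc)$.

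Next I would invoke the naturality of the counit $\eta$ of the adjunction (\ref{adjunction_operads}): applied to the morphism $g \colon Q \lra Q'$ in $\mcalo P_s(\mbbc)$, it yields the commutative square $g \circ \eta_Q = \eta_{Q'} \circ \Sigma[\mcau(g)]$. Substituting this into the left-hand side above, and then using the functoriality of $\Sigma[-]$ (so that $\Sigma[\mcau(g)] \circ \Sigma[f] = \Sigma[\mcau(g) \circ f]$ and $\Sigma[f'] \circ \Sigma[h] = \Sigma[f' \circ h]$), the desired identity reduces to $\eta_{Q'} \circ \Sigma[\mcau(g) \circ f] = \eta_{Q'} \circ \Sigma[f' \circ h]$. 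This now follows at once from the hypothesis that the square (\ref{square_lem1}) commutes in $\mcalo P_{ns}(\mbbc)$, i.e. from $\mcau(g) \circ f = f' \circ h$.

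Every step here is a formal consequence of the adjunction, so there is no serious obstacle; the only point that deserves a line of care is confirming that the naturality square for the counit is exactly $g \circ \eta_Q = \eta_{Q'} \circ \Sigma[\mcau(g)]$, which one checks levelwise from the description $(\eta_Q)_n = \alpha_{Q(n)}$ and the fact that $g$, being a morphism of symmetric sequences, commutes with the $\Sigma_n$-actions. Equivalently, one could present the whole argument in one line as $g \circ \Psi_f = \Psi_{\mcau(g) \circ f} = \Psi_{f' \circ h} = \Psi_{f'} \circ \Sigma[h]$, using naturality of $\Psi$ in the variable $Q$ for the first equality, the hypothesis (\ref{square_lem1}) for the second, and naturality of $\Psi$ in the variable $P$ for the third; I would likely include this as a remark since it makes transparent that the lemma is just functoriality of $\Psi$ in both slots.
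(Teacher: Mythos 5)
Your proof is correct and follows essentially the same route as the paper: the paper also decomposes the claim into the two commuting squares $\Sigma[\mcau(g)]\circ\Sigma[f]=\Sigma[f']\circ\Sigma[h]$ (the image of (\ref{square_lem1}) under $\Sigma[-]$) and $g\circ\eta_Q=\eta_{Q'}\circ\Sigma[\mcau(g)]$ (naturality of the counit, checked levelwise via the $\Sigma_n$-equivariance of $g$), and stacks them. Your closing remark that the whole lemma is just naturality of $\Psi$ in both variables is a clean way to package the same argument.
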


\begin{proof}
The result comes from the fact that the following two squares 
\[
\begin{minipage}{0.5\textwidth}
\xymatrix{\Sigma[\mcau(Q)] \ar[rr]^-{\Sigma[\mcau(g)]} & & \Sigma[\mcau(Q')] \\
            \Sigma[P] \ar[u]^{\Sigma[f]} \ar[rr]^-{\Sigma[h]} & & \Sigma[P'] \ar[u]_{\Sigma[f']} }
\end{minipage}%
\begin{minipage}[t]{1.5cm}
\includegraphics[scale=0.1]{blanc}
\end{minipage}
\begin{minipage}{0.5\textwidth}
\xymatrix{Q \ar[rr]^-g & & Q' \\
\Sigma[\mcau(Q)] \ar[rr]^-{\Sigma[\mcau(g)]} \ar[u]^{\eta_Q} & & \Sigma[\mcau(Q')] \ar[u]_{\eta_{Q'}}}
\end{minipage}
\]
commute. The first square commutes because it is the image of the commutative square (\ref{square_lem1})  under the functor $\Psi$, and the second one commutes because $g_n  \colon Q(n) \lra Q'(n)$ is $\Sigma_n$-equivariant (since , by hypothesis, the morphism $g$ is of symmetric operads) for all $n$.  

\end{proof}

Now,  to state and prove the most important result (Lemma~\ref{sym_lem2} below) of this section, we recall some notations and some classical facts. The first notation is that of a symmetric operad $\as = \{\as(n)\}_{n \geq 0}$ in $\mbbc$, which is a nonsymmetric operad. It is defined by $\as (n) = \unit$ for all $n$. Recall that $\unit$ is the unit for the tensor product in $\mbbc$.  

\begin{defn}
A \emph{multiplicative symmetric operad} is a symmetric operad $Q$ together with a morphism $\Sigma[\as] \lra Q$ in the category of symmetric operads. Similarly, a \emph{multiplicative operad} is a nonsymmetric operad $\mathcal{O}$ toghether with a morphism $\as \lra \mathcal{O}$ in $\mcalo P_{ns}$. 
\end{defn}

The second notation is $\mbbc^{\mathbb{N}}$, which represents the category of sequences $X = \{X(n)\}$ of objects in $\mbbc$. This category is endowed with the cofibrantly generated model structure in which weak equivalences, fibrations and cofibrations are all levelwise.  Of course a generating cofibration $f \colon X \lra Y$ of $\mbbc^{\mathbb{N}}$ is a collection $f =\{f_n \colon X_n \lra Y_n \}_{n \geq 0}$ where each $f_n$ is a generating cofibration of $\mathcal{C}$. The same observation holds for generating acyclic cofibrations of $\mbbc^{\mathbb{N}}$.  As a consequence of the model structure of $\mbbc^{\mathbb{N}}$, a sequence  $X = \{X_n\}$ is cofibrant if and only if  every $X_n$ is cofibrant in $\mathcal{C}$. The third and the last notation we will need is $\Sigma \mbbc^{\mathbb{N}}$, which denotes the category of symmetric sequences in $\mbbc$. This latter category is also endowed with a model structure (also cofibrantly generated) induced by the adjunction
\[\mbox{Sym} \colon \mbbc^{\mathbb{N}} \rightleftarrows \Sigma \mbbc^{\mathbb{N}} : U,\]
where $U$ is of course the forgetful functor, and $\mbox{Sym}$ is the functor defined  as $\mbox{Sym}(P)(n) = \Sigma_n \otimes P(n)$. Recall that a morphism $f \colon P \lra Q$ in $\Sigma \mbbc^{\mathbb{N}}$ is a weak equivalence (respectively a fibration) if $U(f)$ is a weak equivalence in $\mbbc^{\mathbb{N}}$ (respectively $U(f)$ is a fibration in $\mbbc^{\mathbb{N}}$). 

\begin{rmk} \label{cofibrant_rmq} The following are easy to establish. 
\begin{enumerate}
\item[(i)] If $X$ is a cofibrant object in $\mbbc^{\mathbb{N}}$, then so is $\mbox{Sym}(X)$. More generally, the functor $\mbox{Sym}$ preserves cofibrations (this easily follows from adjunction relations).  
\item[(ii)] The associative operad $\as$ is cofibrant as an object in $\mbbc^{\mathbb{N}}$. Indeed, the unit $\unit$ in $\mbbc$ is cofibrant by the unit axiom from Definition~\ref{monoidalmodelcat_def}~(i). 
\item[(iii)] From (i) and (ii), it follows that the symmetric operad $\Sigma[\as]$ is cofibrant as an object in $\Sigma \mbbc^{\mathbb{N}}$. 
\end{enumerate}
\end{rmk}

Before stating Lemma~\ref{sym_lem2}, we mention that the model structure  we consider on $\mcalo P_s (\mbbc)$  (respectively on $\mcalo P_{ns} (\mbbc)$) is the one from \cite[Theorem 12.2.A]{fress08} (respectively the one from the  nonsymmetric version of \cite[Theorem 12.2.A]{fress08}). Notice that the category $\mcalo P_s (\mbbc)$ usually has only semi-model structure by \cite[Theorem 12.2.A]{fress08}. That is, $\mcalo P_s (\mbbc)$ satisfies all axioms for a model category, including the lifting and the factorization axiom, but only for morphisms $f \colon P \lra Q$ whose the domain $P$ is cofibrant as an object of $\Sigma \mathcal{C}^{\mathbb{N}}$. Particularly if $P \in \mcalo P_s (\mbbc)$ is cofibrant as a symmetric sequence,  then any morphism $f \colon P \lra Q$ of symmetric operads has a factorization $f = \eta_2\eta_1$ in $\mcalo P_s (\mbbc)$, where $\eta_1$ is a cofibration and $\eta_2$ is an acyclic fibration. The same observation holds for the category $\mcalo P_{ns} (\mbbc)$.

\begin{lem} \label{sym_lem2}
Let $g \colon R \lra Q$ and $g' \colon R \lra Q'$ be two morphisms of symmetric operads, and let 
\begin{eqnarray} \label{diagram_lem2}
 \xymatrix{Q  & R \ar[l]_-{\sim}^g \ar[r]^{\sim}_{g'}  & Q' \\
                   \Sigma[\as]  \ar[u]^{f}  & \Sigma[\mathcal{A}] \ar[l]_{\sim}^-{h} \ar[r]^{\sim}_-{h}  \ar[u]^{\eta} &  \Sigma[\as]. \ar[u]_{f'}    }  
 \end{eqnarray}
 be a commutative diagram in $\mcalo P_{s}(\mbbc)$. Assume that  $\mathcal{A}$ is cofibrant as an object of  $\mbbc^{\mathbb{N}}$.  Then there is a commutative diagram 
   \[ \xymatrix{Q  & \widetilde{R} \ar[l]_-{\sim} \ar[r]^{\sim}  & Q' \\
                   \Sigma[\as]  \ar[u]^{f}  & \Sigma[\as] \ar[l]_{\sim}^{=} \ar[r]^{\sim}_{=}  \ar[u] &  \Sigma[\as] \ar[u]_{f'}    }  
                    \]
 in the category    $\mcalo P_{s}(\mbbc)$ of symmetric operads.            
\end{lem}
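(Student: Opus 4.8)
The plan is to use the semi-model structure on $\mcalo P_s(\mbbc)$ to replace the span $Q \xleftarrow{g} R \xrightarrow{g'} Q'$ by one whose apex receives a map from $\Sigma[\as]$ that is compatible with $f$ and $f'$. Since $\mathcal{A}$ is cofibrant in $\mbbc^{\mathbb{N}}$, Remark~\ref{cofibrant_rmq} shows $\Sigma[\mathcal{A}]$ is cofibrant as an object of $\Sigma\mbbc^{\mathbb{N}}$, so the semi-model structure applies to morphisms out of $\Sigma[\mathcal{A}]$. First I would factor the map $\Sigma[\mathcal{A}] \lra Q \times_{?} Q'$... — more precisely, I would take the map $\eta \colon \Sigma[\mathcal{A}] \lra R$ and, using that $\Sigma[\mathcal{A}]$ is cofibrant as a symmetric sequence, factor it in $\mcalo P_s(\mbbc)$ as $\Sigma[\mathcal{A}] \overset{j}{\hookrightarrow} \widetilde{R}_0 \overset{p}{\twoheadrightarrow} R$ with $j$ a cofibration and $p$ an acyclic fibration. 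Then $\widetilde{R}_0$ is cofibrant as a symmetric operad (since $\Sigma[\mathcal{A}]$ is cofibrant and $j$ is a cofibration), and composing with $g,g'$ gives a span $Q \xleftarrow{\sim} \widetilde{R}_0 \xrightarrow{\sim} Q'$ all of whose legs are weak equivalences, together with a cofibration $j \colon \Sigma[\mathcal{A}] \hookrightarrow \widetilde{R}_0$ lifting $\eta$.

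The key point is now to push the bottom row from $\Sigma[\mathcal{A}]$ up to $\Sigma[\as]$. The bottom row of (\ref{diagram_lem2}) is $\Sigma[\as] \xleftarrow{h} \Sigma[\mathcal{A}] \xrightarrow{h} \Sigma[\as]$ where both maps are the \emph{same} weak equivalence $h$ — this is the crucial structural feature of the hypothesis. By Lemma~\ref{sym_lem1}, $h = \Psi_{h'}$ for some morphism $h' \colon \mathcal{A} \lra \mcau(\as) = \as$ of nonsymmetric operads; but $\as(n) = \unit$ is terminal among... no — rather, I would observe that the relevant statement is that $h \colon \Sigma[\mathcal{A}] \lra \Sigma[\as]$ is a weak equivalence of symmetric operads whose source is cofibrant. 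The plan is to use $h$ to transport: form the pushout $\widetilde{R} := \widetilde{R}_0 \cup_{\Sigma[\mathcal{A}]} \Sigma[\as]$ in $\mcalo P_s(\mbbc)$ along the cofibration $j$ and the map $h \colon \Sigma[\mathcal{A}] \lra \Sigma[\as]$. Since $j$ is a cofibration, the pushout leg $\Sigma[\as] \lra \widetilde{R}$ is a cofibration, and — because $\widetilde{R}_0$ and $\Sigma[\as]$ are both cofibrant as symmetric sequences (Remark~\ref{cofibrant_rmq}(iii)) and $\mbbc$ is nice — the other pushout leg $\widetilde{R}_0 \lra \widetilde{R}$ is a weak equivalence (left properness / the cube lemma for the semi-model structure, using that $h$ is a weak equivalence between cofibrant objects). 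The universal property of the pushout, applied to $g \colon \widetilde{R}_0 \lra Q$ and $f \colon \Sigma[\as] \lra Q$ — which agree on $\Sigma[\mathcal{A}]$ because the left square of (\ref{diagram_lem2}) commutes and $p j = \eta$, $h$ is the bottom map — produces a map $\widetilde{R} \lra Q$ making everything commute; similarly on the $Q'$ side. Composing $\widetilde{R}_0 \xrightarrow{\sim} \widetilde{R} \lra Q$ and checking two-out-of-three gives that $\widetilde{R} \lra Q$ is a weak equivalence, and likewise $\widetilde{R} \lra Q'$; the bottom row becomes $\Sigma[\as] \xleftarrow{=} \Sigma[\as] \xrightarrow{=} \Sigma[\as]$ as required.

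The main obstacle I anticipate is justifying that the pushout leg $\widetilde{R}_0 \lra \widetilde{R}$ is a weak equivalence: this is exactly the kind of statement (gluing/left-properness) that can fail in a merely semi-model category unless one is careful about cofibrancy hypotheses. The safe route is to verify that \emph{all three} objects $\Sigma[\mathcal{A}], \Sigma[\as], \widetilde{R}_0$ entering the pushout are cofibrant as symmetric sequences (the first two by Remark~\ref{cofibrant_rmq}, the third by the factorization), and that $j$ is a cofibration of symmetric operads with cofibrant source, so that the pushout is a homotopy pushout and the gluing lemma for weak equivalences applies in the semi-model category $\mcalo P_s(\mbbc)$ — invoking the relevant statement from \cite{fress08} or \cite{hovey99} about left properness of cofibrantly generated (semi-)model categories on the relevant cofibrant objects. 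If one prefers to avoid properness entirely, an alternative is to instead factor the combined map $\Sigma[\mathcal{A}] \xrightarrow{h} \Sigma[\as]$ together with $\eta$ through a mapping-cylinder-type construction inside $\mcalo P_s(\mbbc)$, building $\widetilde{R}$ directly as a cofibrant replacement of $R$ \emph{under} $\Sigma[\as]$ via the cofibration $\Sigma[\as] \hookrightarrow \Sigma[\as] \sqcup_{\Sigma[\mathcal{A}]} \widetilde{R}_0$; either way the content is the same two-out-of-three bookkeeping.
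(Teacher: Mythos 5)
Your proposal is correct and follows essentially the same route as the paper: factor $\eta\colon \Sigma[\mathcal{A}]\lra R$ into a cofibration followed by an acyclic fibration (legitimate because $\Sigma[\mathcal{A}]$ is cofibrant as a symmetric sequence), push out along $h$, invoke relative properness for $\mcalo P_s(\mbbc)$ (the paper cites \cite[Theorem 12.2.B]{fress08}, exactly the cofibrancy-guarded gluing statement you flag as the main obstacle) to see that the pushout leg is a weak equivalence, and finish with the universal property and two-out-of-three. The only cosmetic difference is your passing (and unnecessary) claim that the intermediate object is cofibrant as a symmetric operad; the argument only needs $\Sigma[\mathcal{A}]$ and $\Sigma[\as]$ cofibrant as symmetric sequences, which you also verify.
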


\begin{proof}
The proof uses exactly the same techniques as that of the proof of \cite[Lemma 2.7]{songhaf12}, which regards nonsymmetric operads. To be more precise, one has the following commutative diagram (in the category $\mcalo P_{s}(\mbbc)$) constructed as follows. 

\begin{eqnarray} \label{diagram1_proof}
\xymatrix{Q   &  R \ar[l]_{\sim}^g  \ar[rrr]^-{\sim}_-{g'} & & & Q'  \\
              &                                            &  \overline{R} \ar[r]^{\sim}_{h_1} \ar@{->>}[lu]_{\sim}^{\eta_2} & \widetilde{R} \ar[ru]_{h_3}^{\sim} &                  \\
          \Sigma[\as] \ar[uu]^{f}  & \Sigma[\mathcal{A}] \; \; \; \ar@{>->}[ru]^{\eta_1} \ar[l]^-{h}_{\sim} \ar[rrr]^-{\sim}_-{h}  \ar[uu]^{\eta} & & & \Sigma[\as] \ar[uu]_{f'} \ar[lu]_{h_2}   }
\end{eqnarray}

\begin{enumerate}
\item[$\bullet$] The morphisms $\eta_1$ and $\eta_2$ comes from the factorization axiom applied to $\eta$. One can apply this axiom here because  $\Sigma[\mathcal{A}]$ is cofibrant as an object in $\Sigma \mbbc^{\mathbb{N}}$. Indeed, by hypothesis, the operad $\mathcal{A}$  is cofibrant as an object in $\mbbc^{\mathbb{N}}$, which implies (by Remark~\ref{cofibrant_rmq}~(i)) that  $\Sigma[\mathcal{A}]$ is cofibrant in $\Sigma \mbbc^{\mathbb{N}}$.
\item[$\bullet$] The object $\widetilde{R}$ is the pushout of the diagram formed by morphisms $\eta_1$ and $h$. 
\item[$\bullet$] The morphism $h_1$ is a weak equivalence because of the following. Recall first that the axiom of relative properness for symmetric operads \cite[Theorem 12.2.B]{fress08} says that if $P_1$ and $P_2$ are two objects in $\mcalo P_s (\mbbc)$ that are cofibrant as objects in $\Sigma \mbbc^{\mathbb{N}}$, then the pushout of a weak equivalence along a cofibration 
\[
\xymatrix{P_1 \; \ar@{>->}[r]   \ar[d]^{\sim} &    P_3 \ar@{.>}[d]  \\
          P_2  \ar@{.>}[r]        &  P_4 }
\]
gives a weak equivalence $\xymatrix{P_3 \ar[r]^{\sim}  & P_4}$. Since the operad $\Sigma[\mathcal{A}]$ is cofibrant as an object in $\Sigma \mbbc^{\mathbb{N}}$ (by hypothesis and by Remark~\ref{cofibrant_rmq}~(i)), and since $\Sigma[\as]$ is also cofibrant as an object in $\Sigma \mbbc^{\mathbb{N}}$ (by Remark~\ref{cofibrant_rmq}~(iii)), it follows by the axiom of relative properness that $h_1$ is a weak equivalence. 
\item[$\bullet$] The morphism $h_3 \colon \widetilde{R} \lra Q'$ comes from the universal property of the pushout. Moreover, it is a weak equivalence because of the two-out-of-three axiom \footnote{Two-out-of-three axiom: Let $f$ and $g$ be composable morphisms. If any two among $f$, $g$, and $gf$ are weak equivalences, then so is the third} since $g'\eta_2 = h_3h_1$, and $g'\eta_2$ and $h_1$ are weak equivalences. 
\end{enumerate}
One has also a commutative diagram
\begin{eqnarray} \label{diagram2_proof}
\xymatrix{
    \Sigma[\mathcal{A}] \; \ar@{>->}[r]^-{\eta_1} \ar[d]_{h}^{\sim} & \overline{R}  \ar@/^/[rdd]^{g \eta_2}_{\sim} \ar[d]_{\sim}^{h_1} \\
     \Sigma[\as] \ar@/_/[rrd]_{f} \ar[r]^-{h_2} & \widetilde{R} \ar@{.>}[rd]^-{\sim}_-{h_4}\\
     &&  Q}
\end{eqnarray}
in which the morphism $h_4 \colon \widetilde{R} \lra Q$ comes from the universal property of pushout. This latter morphism is a weak equivalence because of the two out of three axiom since $g\eta_2$ and $h_1$ are weak equivalences. 

Now  from diagrams (\ref{diagram1_proof}) and (\ref{diagram2_proof}), we obtain 

\[
\xymatrix{Q    &   \widetilde{R} \ar[l]_-{\sim}^-{h_4} \ar[r]^{\sim}_{h_3}  &  Q'  \\
          \Sigma[\as]  \ar[u]^{f}    &  \Sigma[\as] \ar[l]_-{\sim}^-{=} \ar[u]^{h_2}  \ar[r]^-{\sim}_-{=}  &  \Sigma[\as], \ar[u]_{f'}}
\]

which is a commutative diagram in $\mcalo P_{s}(\mbbc)$. This ends the proof.

\end{proof}

\section{Formality of the Kontsevich operad as a multiplicative symmetric operad} \label{for_kont_section}

The goal of this  section is to prove Theorem~\ref{for_kont_thm}  announced in the introduction.  The base category $\mbbc$ for operads is taken to be $\mbox{Ch}_{\rbb}$, the category of nonnegatively graded chain complexes over the ground field $\rbb$. This latter category is equipped with its standard symmetric monoidal model structure (see \cite[Theorem 2.3.11]{hovey99} for the model structure). That is, weak equivalences (quasi-isomorphisms) and fibrations (epimorphisms) are all level wise. A chain map $f_* \colon A_* \lra B_*$ is a cofibration if for any $n \geq 0$, the map $f_n \colon A_n \lra B_n$ is an injection with projective cokernel. It follows that a chain complex $A_*$ is cofibrant if and only if each $A_n$ is a projective module.  

Before starting the proof of Theorem~\ref{for_kont_thm}, we recall the specific zigzag 

\begin{eqnarray}\label{zigzag_formality}
\xymatrix{C_* (\mathcal{K}_d) & C_*(\mathcal{F}_d) \ar[l]_{\sim} \ar[r]^{\sim} & \mathcal{D}^{\vee}_d & H_*(\mathcal{K}_d) \ar[l]_{\sim}\\
              C_*(\mathcal{K}_1) \ar[u] & C_*(\mathcal{F}_1) \ar[l]_{\sim} \ar[r]^{\sim} \ar[u] & \mathcal{D}_1^{\vee} \ar[u] & H_*(\mathcal{K}_1). \ar[l]_{\sim} \ar[u]}
\end{eqnarray}	 

between the singular chain $C_* (\kd)$ and the homology $H_*(\kd)$ of the Kontsevich operad. This zigzag holds  in the category $\mcalo P_{ns} (\mbox{Ch}_{\rbb})$ of nonsymmetric operads in $\mbox{Ch}_{\rbb}$, and  was explicitly built by Lambrechts and Voli\'c in \cite{lam_vol} (see \cite[Section 9]{lam_vol} and \cite[Section 8]{lam_vol} for the construction of $C_*(\mathcal{F}_d) \stackrel{\sim}{\longrightarrow} \mathcal{D}^{\vee}_d$ and $H_*(\mathcal{K}_d)\stackrel{\sim}{\longrightarrow} \mathcal{D}^{\vee}_d$ respectively).   They built (\ref{zigzag_formality}) in order to develop the Kontsevich's proof \cite{kont99} for the formality of the little $d$-disks operad. Recall that $\mathcal{F}_d$ is the Fulton-MacPherson operad, which is well explained in \cite[Chapter 5]{lam_vol}. Intuitively, an element  $x \in \mathcal{F}_d(n)$ can be viewed as a \lq\lq virtual\rq\rq{} configuration of $n$ points (in $\rdbb$ of course) in which points are allowed to be infinitesimally close to each other while the directions and the relative distances of their approach are recorded. For instance, when $d =1$, one has $\mathcal{F}_1(0) = \mathcal{F}_1(1) = \mathcal{F}_1(2) = *$, while $\mathcal{F}_1(3) \cong [0, 1]$. In general $\mathcal{F}_1(n)$ is known to be the Stasheff associahedron \cite[Subsection 4.4]{sin04}, which implies the following. 
\begin{prop} \label{fo_prop}
For all $n \geq 0$, the topological space $\mathcal{F}_1(n)$ is contractible. 
\end{prop}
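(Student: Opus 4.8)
The plan is to deduce contractibility of $\mathcal{F}_1(n)$ from the identification of $\mathcal{F}_1(n)$ with the $n$-th Stasheff associahedron $K_n$, which the text has just recalled (following \cite[Subsection 4.4]{sin04}). Since each associahedron $K_n$ is by construction a convex polytope — explicitly, it can be realized as a convex subset of a Euclidean space (for instance via the standard secondo-polytope / Loday realization) — it is in particular convex, hence star-shaped about any of its points, hence contractible via the straight-line homotopy. So the first step is simply to invoke $\mathcal{F}_1(n) \cong K_n$ and then the fact that $K_n$ is a convex body.

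If one prefers an argument that does not quote the associahedron realization, the alternative route is inductive on $n$, using the operadic structure of $\mathcal{F}_1$ together with the known low cases $\mathcal{F}_1(0) = \mathcal{F}_1(1) = \mathcal{F}_1(2) = *$. One would build a deformation retraction of $\mathcal{F}_1(n)$ onto a subspace built from lower strata by pushing infinitesimal clusters apart (or collapsing the relative distances), exhibiting $\mathcal{F}_1(n)$ as homotopy equivalent to a cone on a complex of products $\mathcal{F}_1(k)\times \prod \mathcal{F}_1(i_j)$ with $k, i_j < n$, which are contractible by induction; a cone is contractible, so one concludes. A third, cleanest option: recall that $\mathcal{F}_d(n)$ is homotopy equivalent to the (unordered analogue aside, here ordered) configuration space $\mathrm{Conf}(n,\mathbb{R}^d)$ — this is part of why $\mathcal{F}_d$ models the little $d$-disks operad, cf. \cite{sin04} — and for $d=1$, $\mathrm{Conf}(n,\mathbb{R})$ deformation retracts onto the discrete set of orderings; but the connected component $\mathcal{F}_1(n)$ containing the identity ordering is then the compactification of a single contractible chamber, which is the associahedron again. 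For the paper's purposes the first argument is by far the shortest.

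Concretely, I would write: \emph{By \cite[Subsection 4.4]{sin04}, $\mathcal{F}_1(n)$ is homeomorphic to the Stasheff associahedron $K_{n}$, which is a convex polytope (hence a convex subset of some $\mathbb{R}^N$). Any nonempty convex subset of a Euclidean space is contractible, via the linear homotopy $H(x,t) = (1-t)x + t x_0$ toward a fixed point $x_0$. Therefore $\mathcal{F}_1(n)$ is contractible.} The boundary cases $n = 0, 1, 2$ are already one-point spaces and a fortiori contractible, so they need no separate treatment.

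The only real subtlety — hardly an obstacle — is making sure the stated homeomorphism $\mathcal{F}_1(n)\cong K_n$ is being used with the correct conventions: the Fulton–MacPherson space $\mathcal{F}_1(n)$ as defined here is connected (it is the compactification of the configuration chamber $x_1 < \cdots < x_n$, quotiented by the relevant affine group, not the full $\mathrm{Conf}(n,\mathbb{R})$ which has $n!$ components), and it is precisely this connected model that is the associahedron. Once that bookkeeping is acknowledged, the contractibility is immediate from convexity of the polytope, so I expect no difficulty in the argument at all; the work is entirely in citing \cite{sin04} correctly.
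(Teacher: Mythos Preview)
Your proposal is correct and matches the paper's approach exactly: the paper simply invokes the identification of $\mathcal{F}_1(n)$ with the Stasheff associahedron from \cite[Subsection 4.4]{sin04} and declares contractibility as an immediate consequence, without even writing a formal proof environment. Your version actually supplies more detail (the convex-polytope / straight-line-homotopy justification) than the paper does.
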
 

Recall also the operad $\mathcal{D}^{\vee}_d$, which is  the dual of the cooperad $\mathcal{D}_d$ from \cite[Chapter 6 and Chapter 7]{lam_vol}. For $d \geq 2$, $\mathcal{D}^{\vee}_d(n)$ is a chain complex generated by certain graphs with $n$ labeled external vertices. For $d =1$, $\mathcal{D}^{\vee}_1(n)$ is defined as 
\begin{eqnarray} \label{do_defn}
\mathcal{D}^{\vee}_1(n) = H_* (\mathcal{F}_1(n)), n \geq 0.
\end{eqnarray}
Recalling the definition of the operad $\mathcal{K}_d$ from the introduction, we have the following remark. 
\begin{rmk} \label{symmetric_rmk}
Clearly, the operads $\mathcal{K}_d$ and $\mathcal{F}_d$ are both symmetric. The symmetric structure is the canonical one, which is induced by the permutation of points of a configuration. The operad $\mathcal{D}^{\vee}_d$ has the same kind of symmetric structure (by permuting external vertices instead).  
\end{rmk}

The idea of the following proof is the same as that of \cite[Theorem 1.3]{songhaf12}, except that here we work with symmetric operads. 

\begin{proof}[\textbf{Proof of Theorem~\ref{for_kont_thm}}]
We start with the following two observations. The first one says that all the operads appearing  in the first row of (\ref{zigzag_formality}) are symmetric. This immediately comes from Remark~\ref{symmetric_rmk} and the fact that the functors $C_*(-)$ and $H_*(-)$ are symmetric monoidal.  The second observation consists of the following three things:
\begin{enumerate}
\item[$\bullet$] $C_*(\mathcal{K}_1)$ and $H_*(\mathcal{K}_1)$ are both the associative operad $\as$ in $\mbox{Ch}_{\rbb}$ since $C_*(-)$ and $H_*(-)$ are symmetric monoidal functors and $\mathcal{K}_1$ is the topological nonsymmetric associative operad by (\ref{ko_defn}), 
\item[$\bullet$] $\mathcal{D}_1^{\vee}$ is the associative operad by (\ref{do_defn}) and Proposition~\ref{fo_prop}, and 
\item[$\bullet$] $C_*(\mathcal{F}_1)$ is not the associative operad since $\mathcal{F}_1(3)$ is homeomorphic to the unit interval, and $C_*([0,1])$ is not the unit for the usual tensor product in $\mbox{Ch}_{\rbb}$. 
\end{enumerate}

From these two observations, if we denote  $C_*(\mathcal{F}_1)$ by $\mathcal{A}$, one can rewrite (\ref{zigzag_formality}) as 
\begin{eqnarray}\label{zigzag_formality_sym}
\xymatrix{\mathcal{U}(C_* (\mathcal{K}_d)) & \mathcal{U} (C_*(\mathcal{F}_d)) \ar[l]_{\sim} \ar[r]^{\sim} & \mathcal{U} (\mathcal{D}^{\vee}_d) & \mathcal{U} (H_*(\mathcal{K}_d)) \ar[l]_{\sim}\\
              \as \ar[u] & \mathcal{A} \ar[l]_{\sim} \ar[r]^{\sim} \ar[u] & \as \ar[u]& \as, \ar[l]_{\sim} \ar[u]}
\end{eqnarray}	 
where $\mathcal{U}(-)$ is the forgetful functor from (\ref{adjunction_operads}). Applying now Lemma~\ref{sym_lem1} to (\ref{zigzag_formality_sym}) we get
\begin{eqnarray}\label{zigzag2_formality_sym}
\xymatrix{C_* (\mathcal{K}_d) & C_*(\mathcal{F}_d) \ar[l]_{\sim} \ar[r]^{\sim} & \mathcal{D}^{\vee}_d & H_*(\mathcal{K}_d) \ar[l]_{\sim}\\
              \Sigma[\as] \ar[u] & \Sigma[\mathcal{A}] \ar[l]_{\sim} \ar[r]^{\sim} \ar[u] & \Sigma[\as] \ar[u]& \Sigma[\as], \ar[l]_{\sim} \ar[u]}
\end{eqnarray}	 
which is a commutative diagram in $\mathcal{O}P_s(\mbox{Ch}_{\rbb})$. Since  $\mathcal{A} = C_*(\mathcal{F}_1)$ is cofibrant  as an object of $\mbox{Ch}_{\rbb}^{\mathbb{N}}$ \footnote{This is because each $C_*(\mathcal{F}_1(n))$ is cofibrant in $\mbox{Ch}_{\rbb}$ since every $C_k(\mathcal{F}_1(n)), k \geq 0,$  is a module over $\rbb$ (that is, a vector space), and  cofibrant objects in $\mbox{Ch}_{\rbb}$ are chain complexes of projective modules as we mentioned at the beginning of this section}, and since the two morphisms from $\Sigma[\mathcal{A}]$ to $\Sigma[\as]$ are the same, the desired result follows by applying  Lemma~\ref{sym_lem2} to the subdiagram 
\[
\xymatrix{C_* (\mathcal{K}_d) & C_*(\mathcal{F}_d) \ar[l]_{\sim} \ar[r]^{\sim} & \mathcal{D}^{\vee}_d \\
              \Sigma[\as] \ar[u] & \Sigma[\mathcal{A}] \ar[l]_{\sim} \ar[r]^{\sim} \ar[u] & \Sigma[\as] \ar[u] }
\] 
from (\ref{zigzag2_formality_sym}). This ends the proof. 
\end{proof}



\textsf{University of Regina, 3737 Wascana Pkwy, Regina, SK S4S 0A2, Canada\\
Department of Mathematics and Statistics\\}
\textit{E-mail address: pso748@uregina.ca}

\end{document}